\documentclass[11pt,letterpaper]{amsart}
\usepackage{hyperref}
\usepackage{breqn,cleveref}
\usepackage{microtype}
\usepackage{comment}
\usepackage{setspace}
\usepackage{bm}
\usepackage{amssymb,amsmath,xcolor}
\usepackage{mathtools}
\mathtoolsset{showonlyrefs}
\usepackage{graphicx}

\newcommand{\bi}{\bar{i}}

\newcommand{\bk}{\bar{k}}

\newcommand{\bq}{\bar{q}}

\newcommand{\pbp}{\partial \bar{\partial}}

\newcommand{\fRe}{\mathfrak{Re}}

\newcommand{\ol}{\overline}
\newcommand{\ul}{\underline}

\newtheorem{theorem}{Theorem}[section]
\newtheorem{lemma}[theorem]{Lemma}

\newtheorem{corollary}[theorem]{Corollary}

 \theoremstyle{definition}

\theoremstyle{remark}

\numberwithin{equation}{section}

\begin{document}

\title[Geodesic equation in the space of mixed Volume Forms]{Regularity of a Geodesic equation in the space of mixed Volume Forms on Hermitian Manifolds}

\author{Mathew George}
\address{Department of Mathematics, Purdue University,
         West Lafayette, IN 47907, USA}
\email{georg233@purdue.edu}

\date{}

\begin{abstract}

We prove regularity of a fully nonlinear equation that arises from the study of geodesics in the space of mixed volume forms on Hermitian manifolds admitting a balanced metric. Under conditions for ellipticity, we prove that this degenerate equation has a $C^{1,1}$ solution on Hermitian manifolds. We derive uniform Laplacian estimates for the perturbed equation, and also construct explicit subsolutions. In particular, this shows the existence of a unique $C^{1,1}$ solution to the Donaldson equation on Hermitian manifolds.

\end{abstract}

\subjclass[2020]{Primary 35R01; Secondary 35G30, 35J60}

\maketitle

\section{Introduction}

The study of geodesic equations in geometric spaces have led to powerful machinery in the complex geometry. The most notable application in this direction is the existence and uniqueness of canonical metrics on compact K\"ahler manifolds. In a different context, the space of volume forms on Riemannian manifolds was introduced by Donaldson \cite{Donaldson10} and studied extensively by Chen-He \cite{CH11} and He \cite{He08}. This was introduced in connection to a free-boundary problem associated to the Nahm's equation. Similar type of equations have been observed in the Gursky-Streets framework \cite{GS19,He21} of the Yamabe problem. Gursky and Streets introduced a Riemannian metric on the space of conformal metrics on a Riemannian manifold that leads to a degenerate fully nonlinear equation. Motivated by these progresses, in \cite{George24} we studied the space of mixed volume forms $\mathcal V_p$ on a balanced manifold $(M, \omega)$ and a similar type of PDE is obtained:

\begin{equation} \label{I1.1}
\begin{cases}
    &\phi_{ t t } ( n + n X \phi + \Delta \phi) - | \nabla \phi_t |^2 = - \dfrac{ n X \phi_t^2 }{2},\\
     & \phi(z,0) = \phi_0(z),\;\;\; \phi(z,1) = \phi_1(z) \in \mathcal{V}_p
    \end{cases}
\end{equation}

\

However this equation is quite different since it involves the geometric quantity $X \omega^n= \sqrt{-1} \pbp \omega^{p-1} \wedge \omega^{n-p}$, the sign of which determines the type of the equation. In this paper, we derive $C^{1,1}$ estimates for this equation on Hermitian manifolds, assuming $X\leq 0$ which makes it degenerate elliptic. In the particular case when $X=0$, this shows the existence of $C^{1,1}$ solutions to the Donaldson's equation:

\begin{equation}\label{Donaldson-eqn}
    \begin{cases}
     &\phi_{tt}(n + \Delta \phi) - |\nabla \phi_t|^2 = 0,\\
     &\phi(z,0)=\phi_0(z),\;\; \phi(z,0)=\phi_0(z)  
    \end{cases}
\end{equation}

\

\noindent on Hermitian manifolds $M \times [0,1]$ with smooth boundary data. 

\

Fix any $1 \leq p \leq n$. Given a Hermitian manifold $(M, \omega)$, let $\Omega_{\phi}=\omega^{p}+\sqrt{-1}\pbp(\phi \omega^{p-1})$. Consider the space of mixed volume forms given by 

$$ \mathcal{V}_p = \{ \phi\in C^{\infty}(M) : \Omega_{\phi} \wedge \omega^{n-p} >0 \},$$

\noindent with a Riemannian metric

$$(\psi_1, \psi_2)_{\phi} = \left(\int_M\psi_1 \psi_2 \; \Omega_{\phi} \wedge \omega^{n-p}\right)^{1/2}$$

\

\noindent for $\psi_1, \psi_2 \in T_\phi \mathcal V_p$. Assuming in addition that $d \omega^{n-1} = 0$ i.e. the metric is balanced, a geodesic equation \cite{George24} in the domain $M \times [0,1]$ is given by \eqref{I1.1}.

\

 While the case of balanced metrics yield $X \geq 0$ \cite{George24} forcing the equation \eqref{I1.1} to be degenerate hyperbolic, this paper deals with the regularity of this equation when $X \leq 0$. Under this condition, it becomes degenerate elliptic. We plan to consider the geometrically distinct hyperbolic case in future works.

\

The main result of the paper is the following.

\begin{theorem}
    Let $(M, \omega)$ be a Hermitian manifold and $\psi >0$ be a smooth function. Then assuming $X \leq 0$, any solution $\phi \in C^{4}(M \times [0,1])$ to the equation 
    
    \begin{equation} \label{I1.2}
\begin{cases}
    &\phi_{ t t } ( n + n X \phi + \Delta \phi) - | \nabla \phi_t |^2 = \psi - \dfrac{ n X \phi_t^2 }{2},\\
     & \phi(z,0) = \phi_0(z),\;\;\; \phi(z,1) = \phi_1(z) \in \mathcal{V}_p
    \end{cases}
\end{equation}

   \noindent satisfies

    $$|\Delta \phi|_{C^0}\; + \;|\nabla \phi|_{\omega} \; \leq \; C, $$

    \

    \noindent for a uniform constant $C$ that depends only on the background data.

\end{theorem}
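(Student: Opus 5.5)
Set $g = n + nX\phi + \Delta\phi$, so the equation reads $\phi_{tt} g - |\nabla\phi_t|^2 = \psi - \tfrac{n}{2}X\phi_t^2$. Because $X\le 0$ and $\psi>0$, the right-hand side is positive. The natural ellipticity cone is therefore $\phi_{tt}>0$, $g>0$, in which the operator $F = \log(\phi_{tt} g - |\nabla\phi_t|^2)$ is concave and elliptic in the variables $(\phi_{tt},\nabla\phi_t,\partial\bar\partial\phi)$.

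The plan is to prove the estimates in the order $C^0$, then $\phi_t$, then the spatial gradient, then the Laplacian. The boundary values come from explicit sub- and supersolutions.

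\textbf{Step 1 ($C^0$ and $\phi_t$ bounds).} Ellipticity gives $\phi_{tt}>0$, so $\phi$ is convex in $t$ and hence $\phi \le (1-t)\phi_0 + t\phi_1$. For the lower bound I would build a subsolution of the form
$$\underline\phi = (1-t)\phi_0 + t\phi_1 + A(t^2-t),$$
with $A$ large. Since $\phi_0,\phi_1\in\mathcal V_p$, the quantity $n+nX\underline\phi+\Delta\underline\phi$ stays positive. Here $X\le 0$ helps, since $-nX\cdot A(t-t^2)\ge 0$. Then $\underline\phi_{tt}=2A$ dominates $|\nabla\underline\phi_t|^2 = |\nabla(\phi_1-\phi_0)|^2$ and $\psi$. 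The comparison principle for this concave elliptic operator gives $\phi\ge\underline\phi$. The term $-\tfrac{n}{2}X\phi_t^2$ is a zeroth-order perturbation in $\phi_t$; I would handle it by checking that its linearization does not break the maximum principle, which needs care because $X\le0$ makes it increasing in $\phi_t^2$. Convexity in $t$ together with the barriers at $t=0,1$ then bounds $\phi_t$ uniformly.

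\textbf{Step 2 (gradient and Laplacian).} For $\nabla\phi$ I would apply the maximum principle to $\log|\nabla\phi|^2 + h(\phi)$, with $h$ chosen as in Hou--Ma--Wu or Błocki. For the Laplacian I would consider
$$Q = \log g + B\,\phi_t^2 - A\phi,$$
or more simply $\log(n+\Delta\phi) - A\phi$. Differentiate the equation twice in space, apply $\Delta$, and use the concavity of $\log\det$-type operators. The linearized operator is
$$L = g\,\partial_t^2 + \phi_{tt}\,\Delta - 2\,\mathrm{Re}\langle\nabla\phi_t,\nabla\partial_t\rangle,$$
divided by the determinant.

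The third-order terms are controlled via concavity of the $2\times2$-type determinant structure. The Hermitian torsion terms $T*\nabla^3\phi$ are controlled by a Guan--Li / Tosatti--Weinkove style argument, using the good term $\phi_{tt}|\nabla\nabla\phi|^2/g$. The $-A\phi$ term produces $-A\,L\phi \ge$ (a positive multiple of $\phi_{tt}+g$) minus a constant, which absorbs the curvature terms. Terms involving $\Delta X$ and $\nabla X$ are lower order once $\phi$ and $\phi_t$ are bounded.

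\textbf{Step 3 (boundary values of $\Delta\phi$).} At $t=0,1$, $\Delta\phi$ is determined by the data. An interior maximum therefore yields the bound.

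The main obstacle is the Laplacian estimate, specifically controlling the torsion terms and the $\nabla\phi_t$ cross terms. This is harder than in the Kähler case because $\Delta\phi$ is not a full complex Hessian. My first attempt would be to use the equation to write
$$\phi_{tt} = \frac{\psi - \tfrac{n}{2}X\phi_t^2 + |\nabla\phi_t|^2}{g}$$
and to exploit the good term $|\nabla\phi_t|^2/g^2$ arising from concavity. If the gradient estimate does not close directly, I would fall back on a Błocki-type blow-up argument, using that the equation is scalar in the spatial variable.
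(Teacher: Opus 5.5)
There are two independent gaps in Steps 1--2, and the Laplacian and gradient arguments also rest on good terms that this equation does not produce.

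\textbf{Step 1 barrier and the missing $\phi_{tt}$ bound.} Your barrier is not a subsolution once $X\not\equiv 0$: when you said $2A$ dominates, you dropped the $\phi_t^2$ term. Write $g_j=n+nX\phi_j+\Delta\phi_j$. For $\underline\phi=(1-t)\phi_0+t\phi_1+A(t^2-t)$ you need
\[
2A\bigl[(1-t)g_0+tg_1+n|X|At(1-t)\bigr]-|\nabla(\phi_1-\phi_0)|^2\;\ge\;\psi+\tfrac{n|X|}{2}\bigl(\phi_1-\phi_0+A(2t-1)\bigr)^2 .
\]
At $t=0$ the left side is $O(A)$, while the right side is $\tfrac{n|X|}{2}A^2+O(A)$. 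Small $A$ cannot beat $|\nabla(\phi_1-\phi_0)|^2+\psi$ either, so in general no $A$ works. The paper takes $\underline\phi=(1-t)\phi_0+t\phi_1+k(t)$ and substitutes $y=c_0+nXk$. This reduces the condition to $(y^2)''<-2n|X|K$, so $y$ is the square root of an explicit concave quadratic.

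Separately, your plan never bounds $\sup\phi_{tt}$, and the Laplacian estimate cannot close without it. Commuting $\Delta$ past $\nabla\phi_t$ produces $\phi_{t\bar k}T^q_{ik}\phi_{q\bar i t}$ and $\phi_{t\bar k}R^q_{i\bar i k}\phi_{qt}$. After Cauchy--Schwarz these leave $C|\nabla\phi_t|^2/g$, which is bounded only via $|\nabla\phi_t|^2\le g\,\phi_{tt}$. The paper also needs $b\sup\phi_{tt}\le\epsilon$. There, $\sup\phi_{tt}$ (with $|\phi|,|\phi_t|$) is imported from earlier work, and that bound is what the subsolution is for.

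\textbf{Step 2 Laplacian and gradient.} The equation sees $\partial\bar\partial\phi$ only through its trace, so no good term $\phi_{tt}|\nabla\bar\nabla\phi|^2/g$ ever appears. The torsion terms that do occur are the ones above, in $\nabla\bar\nabla\phi_t$, and they are absorbed by $\sum_{i,k}(|\phi_{ikt}|^2+|\phi_{i\bar kt}|^2)$ coming from $\Delta|\nabla\phi_t|^2$.

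With $\mathcal L=\phi_{tt}\Delta+g\,\partial_t^2-2\mathrm{Re}\langle\nabla\phi_t,\nabla\partial_t\rangle$ you get
\[
-A\mathcal L\phi=A(n+nX\phi)\phi_{tt}-2A\psi+nAX\phi_t^2 .
\]
This contains no multiple of $g$, and $n+nX\phi$ need not even be positive. Yet at the maximum of your $Q$ the $t$-critical relation turns $-g|\partial_t\log g|^2$ into $-g\,\phi_t^2(A-2B\phi_{tt})^2$, a bad term of size $g$ with nothing to absorb it. The paper adds $h(t)=-2\delta\log(1+t)$, so that $A(\phi)h''-(L+\Delta\phi)h'^2/(4\delta)\ge\frac{\delta(1-\epsilon)}{4}(L+\Delta\phi)$, where $L=\sup(n+nX\phi)$.

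The term $2n\,\mathrm{Re}(\phi_{ttk}\nabla_{\bar k}X\,\phi)$ from $\Delta(\phi_{tt}\,nX\phi)$ is third order, not lower order, and it is the main new difficulty. The paper adds $b\phi_{tt}$ to the test function, and $b\mathcal L(\phi_{tt})$ supplies $2b|\nabla\phi_{tt}|^2$. At the maximum, $\nabla\Delta\phi=-b(L+\Delta\phi)\nabla\phi_{tt}$ makes the other $\nabla\phi_{tt}$ cross term positive. Also $\phi_{ttt}=-\frac1b\bigl(h'+\frac{\Delta\phi_t}{L+\Delta\phi}\bigr)$ yields the $|\Delta\phi_t|^2/(L+\Delta\phi)$ term that pays for the one from the $\log$.

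For the gradient, you give no way to close a direct maximum-principle estimate. Your blow-up fallback would need a Liouville theorem for the degenerate limit equation, which you do not have. The paper reverses your order. It first proves $\sup|\Delta\phi|\le C(1+\sup|\nabla\phi|)$. Then Calder\'on--Zygmund on each time slice, combined with Gagliardo--Nirenberg $\|\nabla\phi\|_\infty\le C\|D^2\phi\|_{L^p}^{a}\|\phi\|_\infty^{1-a}+C\|\phi\|_\infty$ with $a<1$ for $p>2n$, gives $\sup|\nabla\phi|\le C(1+\sup|\nabla\phi|)^{a}$.
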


\

This is sufficient for showing the existence of $C^{1,1}$ weak solutions to \eqref{I1.1}.

\begin{corollary}\label{corollary}
    There exists a $C^{1,1}$ weak solution to the equation 
 \begin{equation}
            \phi_{tt}(n + nX \phi + \Delta \phi) - |\nabla \phi_t|^2 =  - \dfrac{nX \phi_t^2}{2}
    \end{equation}

    \noindent satisfying the boundary conditions $\phi(z,0) = \phi_0(z), \;\;\; \phi(z,1) = \phi_1(z)$ in $\mathcal{V}_p$. If $X=0$ this solution is unique.
\end{corollary}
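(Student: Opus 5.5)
We will prove the Corollary by the continuity method, using the a priori estimates of the main Theorem for the perturbed equation \eqref{I1.2}. Fix a smooth small parameter $\varepsilon>0$ and consider \eqref{I1.2} with right-hand side $\psi=\varepsilon$ (more precisely $\psi=\varepsilon\,\tilde\psi$ with $\tilde\psi>0$ a fixed smooth function to be chosen). Since $X\le 0$ and $\psi>0$, the operator is uniformly elliptic along solutions satisfying $\phi_{tt}>0$ and $n+nX\phi+\Delta\phi>0$. For fixed $\varepsilon$ we first produce a smooth solution $\phi^\varepsilon$, then pass to the limit $\varepsilon\to 0$.

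\textbf{Step 1: subsolution and openness/closedness.} We construct an explicit subsolution interpolating the boundary data, e.g.
\begin{equation}
\ul\phi(z,t)=(1-t)\phi_0+t\phi_1+A\,(t^2-t),
\end{equation}
with $A$ large. Then $\ul\phi_{tt}=2A$ and $n+nX\ul\phi+\Delta\ul\phi$ remains positive because $\phi_0,\phi_1\in\mathcal V_p$ and the $A(t^2-t)$ term only adds $nXA(t^2-t)\ge 0$ when $X\le 0$. Choosing $A$ large makes $\ul\phi$ a strict subsolution of \eqref{I1.2} for the perturbed right-hand side. We then run a continuity path from an equation solved by $\ul\phi$ (with its own right-hand side $\psi_0$) to the target equation, e.g. $\psi_s=(1-s)\psi_0+s\varepsilon$. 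Openness follows from the implicit function theorem: the linearized operator is elliptic and has no zeroth-order term of the wrong sign, since the coefficient $nX\phi_{tt}\le 0$, so the maximum principle gives invertibility with Dirichlet data. Closedness needs $C^{2,\alpha}$ bounds. The $C^0$ bound follows from comparison with $\ul\phi$ and with the harmonic-in-$t$ supersolution. The gradient and Laplacian bounds are the Theorem, which applies to $C^4$ solutions. The full second derivatives, including $\phi_{tt}$ and $\nabla\phi_t$, are controlled from the equation together with boundary estimates built using $\ul\phi$. Evans--Krylov-type $C^{2,\alpha}$ estimates (for concave operators, writing the equation as $\log$ of the determinant-like form, which is concave) and Schauder bootstrapping then give smooth solutions.

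\textbf{Step 2: limit $\varepsilon\to0$.} The bounds from the Theorem and the $\phi_{tt},|\nabla\phi_t|$ bounds are uniform in $\varepsilon$, with $\psi$ only entering through $\sup\psi$ and derivatives of $\psi$. Hence $\phi^\varepsilon$ is bounded in $C^{1,1}$, and by Arzel\`a--Ascoli a subsequence converges in $C^{1,\beta}$ to some $\phi\in C^{1,1}$. This $\phi$ solves the degenerate equation almost everywhere and in the viscosity/weak sense. The main obstacle is ensuring that every constant is independent of $\varepsilon$, especially in the boundary second-derivative estimates where the degeneracy appears. The strict subsolution $\ul\phi$, independent of $\varepsilon$, should serve as the barrier there.

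\textbf{Step 3: uniqueness when $X=0$.} The equation then becomes Donaldson's, with operator $F=\phi_{tt}(n+\Delta\phi)-|\nabla\phi_t|^2$, which is degenerate elliptic and has no dependence on $\phi$ itself. Suppose $\phi,\phi'$ are two $C^{1,1}$ solutions. We compare each with the approximants $\phi^\varepsilon$: since $\phi^\varepsilon$ solves the equation with right-hand side $\varepsilon>0$, the function $\phi^\varepsilon+\varepsilon(t^2-t)\cdot c$ perturbs $\phi^\varepsilon$ to a strict sub/supersolution. The comparison principle for degenerate elliptic equations with a strict inequality (maximum principle applied a.e., or Jensen/Ishii viscosity comparison) then gives $|\phi-\phi^\varepsilon|\le C\varepsilon$. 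Letting $\varepsilon\to0$ yields $\phi=\phi'$.
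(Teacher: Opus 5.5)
Your architecture (continuity method at $\psi>0$, the Theorem's Laplacian and gradient bounds, Evans--Krylov, then $\psi\to0$) is the paper's. However, the subsolution in Step 1 does not work for general boundary data once $X<0$ somewhere. Your lower $C^0$ bound, the $\phi_{tt}$ and boundary estimates, and the starting point of your continuity path all rest on it.

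Call your constant $a$, to avoid a clash with $A(\phi)=n+nX\phi+\Delta\phi$. You checked that $A(\ul\phi)=tA(\phi_1)+(1-t)A(\phi_0)+nXa(t^2-t)\ge c_0>0$. But you overlooked the first-order term $\frac{nX}{2}\ul\phi_t^2$, where $\ul\phi_t=\phi_1-\phi_0+a(2t-1)$. On the slice $t=0$,
\begin{equation*}
\ul\phi_{tt}A(\ul\phi)-|\nabla\ul\phi_t|^2+\frac{nX}{2}\ul\phi_t^2=2a\,A(\phi_0)-|\nabla(\phi_1-\phi_0)|^2-\frac{n|X|}{2}\bigl(a-(\phi_1-\phi_0)\bigr)^2 ,
\end{equation*}
which tends to $-\infty$ as $a\to\infty$ wherever $X<0$. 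The gain $2n|X|a^2t(1-t)$ produced by $nXa(t^2-t)$ vanishes at $t=0,1$, while the loss $\frac{n|X|}{2}a^2(2t-1)^2$ does not. For small $a$ the term $|\nabla(\phi_1-\phi_0)|^2$ wins instead, so in general no $a$ works.

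This is exactly why the paper uses the Chen--He quadratic only for $X=0$. For $X<0$ it replaces $a(t^2-t)$ by a square-root profile $g(t)$. With $y=c_0+nXg$ one has $g''(c_0+nXg)+nXg'^2=\frac{1}{2nX}\partial_t^2(y^2)$. Choosing $y^2$ to be a concave quadratic in $t$ with $\partial_t^2(y^2)=-2\Lambda$ makes this equal to $\Lambda/(n|X|)$, which dominates $K$ once $\Lambda>nK\sup|X|$. Since $X$ depends on $z$ and $g$ should not, a cleaner version is $g=c-\sqrt{c^2+\lambda t(1-t)}$ with $c\le c_0/(2n\sup|X|)$. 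Then $g''(c_0+nXg)+nXg'^2=(c_0+nXc)g''+n|X|\lambda\ge\frac{c_0}{2}g''$, and $g''\ge\sqrt{\lambda/2}$ once $\lambda\ge 4c^2$.

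Step 3 has a separate hole. The inequality $\phi^\varepsilon\le\phi$ is fine. $\phi^\varepsilon$ is already a strict subsolution, and $F(\phi^\varepsilon)-F(\phi)$ is the linearization at $(\phi^\varepsilon+\phi)/2$ (uniformly elliptic a.e.) applied to the $C^{1,1}$ function $\phi^\varepsilon-\phi$, so ABP/Bony applies.

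The reverse inequality is not justified. The function $w=\phi^\varepsilon+c\varepsilon t(1-t)$ has $F(w)=\varepsilon-2c\varepsilon(n+\Delta\phi^\varepsilon)$, which is negative only if $n+\Delta\phi^\varepsilon>1/(2c)$. So you would need a positive lower bound on $n+\Delta\phi^\varepsilon$ that does not degenerate with $\varepsilon$. The estimates give only upper bounds, and the equation gives only $n+\Delta\phi^\varepsilon\ge\varepsilon/\sup\phi^\varepsilon_{tt}$. Perturbing $\phi$ instead runs into the same problem with $n+\Delta\phi$.

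The paper asserts uniqueness without giving an argument, but one that works for $X=0$ uses concavity. $F=\phi_{tt}(n+\Delta\phi)-|\nabla\phi_t|^2$ is a Lorentzian quadratic form in quantities affine in $\phi$, so $\sqrt F$ is concave on the admissible cone. Take the strict subsolution $\ul\phi$ (your quadratic is fine when $X=0$) and set $\phi_s=(1-s)\phi+s\ul\phi$. Then $F(\phi_s)\ge s^2\inf F(\ul\phi)>\varepsilon$ for $s=C\sqrt\varepsilon$. Comparison gives $\phi_s\le\phi^\varepsilon$, hence $\phi\le\phi^\varepsilon+C\sqrt\varepsilon$, and applying this to both solutions gives uniqueness.
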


Here is a rough sketch of the arguments in this paper. First, we derive estimates for the Laplacian $\Delta  \phi$, that depends linearly on the gradient $\nabla \phi$. Then $|\nabla \phi|_{\omega}$ estimate is obtained by interpolation inequalities. From the previous work \cite{George2024}, we already have estimates for $|\phi|$, $|\phi_{t}|$ and $|\phi_{tt}|$ needed to state the final result. The $\phi_{tt}$ estimate is conditional on the existence of a subsolution, which we construct explicitly in Section $2$. Finally, the Evans-Krylov theorem is used to bound the full complex Hessian. None of the estimates uses the balanced condition and could be stated in generality on Hermitian manifolds.

\

\section*{Acknowledgement and AI tool use}

The author would like to thank Nicholas McCleerey for helpful discussions and Xi Sisi Shen for helpful comments.

\

Gemini was used for literature search.

\

\section{Preliminaries}

\bigskip

We fix some notations first. Let  $(M, \omega)$ be a Hermtian manifold and $X =  M \times [0,1]$ with the corresponding product metric. $\Delta$ will denote the Laplacian with respect to the Dolbeault operator $\partial$ given by

$$\Delta  = \partial \partial^* + \partial^* \partial .$$

\

For a smooth function $\phi \in C^{\infty}(M)$,  it can be written as 

$$\Delta \phi = \partial^* \partial\phi = \mbox{tr}_{\omega} (i \pbp \phi).$$

\

The real part of the Hermitian metric defines a Riemannian metric $g$ on $M$ whose Laplace-Beltrami operator is given by 

$$\Delta_g = dd^* + d^* d.$$

They are related by

\begin{equation}\label{Laplacians}
    \Delta_g \phi = 2 \Delta \phi + g(d\phi, \theta),
\end{equation}

\

\noindent for $\theta = J d^* \omega$ involves the torsion of $\omega$. The complex Hessian and real Hessians will be denoted by $i \pbp \phi$ and $D^2 \phi$ respectively.

\

\textbf{Ellipticity condition:} We show that $X \leq 0$ is necessary for degenerate ellipticity. Denote $A(\phi) = n+ nX \phi + \Delta \phi.$ Then the symbol of the differential operator is given in local orthonormal coordinates $\{z_k\}$ by

$$ \sigma(\phi) =  \begin{bmatrix}
    A(\phi) & -\phi_{k t} \\
    -\phi_{\bk t} & \phi_{tt}
\end{bmatrix}.$$

\ 

Degenerate ellipticity requires this matrix to be positive semi-definite, which would imply that its Schur complement

$$A(\phi) - \begin{bmatrix}
    -\phi_{1 t} & -\phi_{2 t}& \hdots &-\phi_{n t}
\end{bmatrix} \mbox{Diag}
     \left[\frac{1}{\phi_{tt}}, \frac{1}{\phi_{tt}},\hdots,  \frac{1}{\phi_{tt}}\right]\begin{bmatrix}
    -\phi_{{\bar 1} t} \\ -\phi_{{\bar 2} t} \\ \vdots \\ -\phi_{{\bar n} t}
\end{bmatrix} $$

\

\noindent is positive semi-definite. But this is equal to 

$$\frac{1}{\phi_{tt}} (A(\phi)\phi_{tt} - |\nabla \phi_t|^2) = \frac{-nX\phi_t^2}{2\phi_{tt}}.$$

\

So if $X>0$, the PDE cannot be degenerate elliptic.

\bigskip

\textbf{Subsolutions:} We construct a subsolution of the form 

\begin{equation}
    \ul{\phi}(z,t) = (1-t) \phi_0(z) + t \phi_{1}(z) + g(t)
\end{equation}

\

\noindent for some function $g(t)$. We reduce it to solving an ODE for the function $g$. Assume $X<c<0$. If $X=0$, the subsolution from \cite{CH11} given by

$$\ul{\phi} = (1-t)\phi_0 + t\phi_1  + at(1-t),$$

\

\noindent for some carefully chosen $a>0$ will be sufficient.

\

Calculating the derivatives of $\ul{\phi}$,

\begin{equation}
    \begin{aligned}
        &\Delta \ul{\phi} = (1-t) \Delta \phi_0 + t \Delta \phi_1, \quad \; &\nabla \ul{\phi}_t = \nabla (\phi_1 - \phi_0)\\
        & \ul{\phi}_t = g'(t) + (\phi_1 -\phi_0), &{\ul{\phi}}_{tt} = g''(t)
    \end{aligned}
\end{equation}

\

This gives

\begin{equation}\label{subsolution1}
    \begin{aligned}
        \ul{\phi}_{tt}(n + n X \ul{\phi} + \Delta \ul{\phi}) - |\nabla \ul{\phi}_t|^2 + \frac{nX \ul{\phi}_t^2}{2} =& \;g''(t)\left[tA(\phi_1)+ (1-t) A(\phi_0) + nX g(t)\right] \\
         - &|\nabla(\phi_1 - \phi_0)|^2 + \frac{nX}{2}\left[g'(t) + (\phi_1 -\phi_0)\right]^2\\
        \geq & \; g''(t)[c_0 + nXg(t)] + nX g'(t)^2-K
    \end{aligned}
\end{equation}

\

\noindent where $c_0 = \inf_{X\times [0,1]}\{tA(\phi_1)+ (1-t) A(\phi_0)\} >0 $, since each $A(\phi_0)$ and $A(\phi_1)$ are separately bounded below by uniform positive constants. Also

$$K = \sup_{X}\{|\nabla(\phi_1 - \phi_0)|^2 + n|X| (\phi_1 -\phi_0)^2\}.$$

\

It is enough to show that the RHS of \eqref{subsolution1} is strictly positive.

\ 

Define $y = c_0 + nXg(t)$. Then $g''(t) = \dfrac{y''}{nX}$ and $g'(t) = \dfrac{y'}{nX}$. So we can write

$$g''(t)[c_0 + nXg(t)] + nX g'(t)^2 -K = -\frac{1}{n|X|}\left(y''y + {y'}^2 - nKX\right).$$

This is an ODE that can be solved explicitly. We look for solutions of

\begin{equation}
    y''y + {y'}^2 = - A
\end{equation}

for $A>0$ and $y(0) = y(1) = c_0$. The solution is given by

\begin{equation}
    y^2 = -At^2 + (c_0 - c_0^2 +A)t + c_0^2.
\end{equation}

\

Choose a constant $A$ such that $A > \sup{(n K |X|)}$. For this $A$,

$$ -At^2 + (c_0 - c_0^2 +A)t + c_0^2 = c_0t + c_0^2(1-t) + At(1-t) >0$$

\noindent is strictly positive. So $y$ is well-defined and $y''y + {y'}^2 - nKX$ is negative as required. Finally we have an explicit subsolution:

$$\ul{\phi}(z,t) =(1-t) \phi_0(z) + t \phi_{1}(z) + \frac{\sqrt{-At^2 + (c_0 - c_0^2 +A)t + c_0^2} -c_0}{nX},$$

\

\noindent for $A$, $c_0$, and $K$ as defined above.

\section{Laplacian estimates on Hermitian manifolds}

\

We first show that the Laplacian estimates can be obtained by a shorter argument if $X=0$. That is, if the equation is given by 

$$\phi_{tt}(n + \Delta \phi) - |\nabla \phi_t|^2 = \psi(z),$$

\

\noindent for $\psi>0$. For $\psi=0$, this matches the classical version of the equation \eqref{Donaldson-eqn} studied in \cite{CH11} and \cite{He08} on Riemannian manifolds. The torsion terms in the Hermitian case causes some difficulties that cannot be addressed by previous methods. We estimate the Laplacian by considering the function

$$Q = \log(n +\Delta \phi) + h(t)$$

\

\noindent for function $h$ to be fixed later. Denote the linearization of the PDE at $\phi$ by $\mathcal L_{\phi} v$

$$\mathcal L_{\phi} v = \phi_{tt} \Delta v + (n+\Delta \phi) \;v_{tt} - 2 \fRe{(\phi_{tk} v_{\bk t})}.$$

\

Differentiate the PDE by $\Delta$ to get

\begin{equation}\label{diff-pde}
    \begin{aligned}
        \Delta \psi &= \Delta \phi_{tt} (n+ \Delta \phi) + \phi_{tt} \Delta \Delta \phi - 2 \fRe{(\phi_{t k} \Delta \phi_{t \bk})}\\
        & \;\;\;+\sum\limits_i \left(2\fRe(\phi_{tt i} \Delta \phi_{\bi}) - |\nabla_i \phi_{t k} |^2- |\nabla_{\bi} \phi_{t k}|^2\right)\\
        & = \mathcal L_{\phi} (\Delta \phi) + \sum_k(\phi_{tk}\ol{T^q_{i k}} \phi_{i \bq t} + \phi_{t \bk}T^q_{i k} \phi_{q \bi t} - \phi_{t \bk} R_{i \bi k}^q \phi_{q t}) \\
        &\;\;\;+\sum\limits_i \left(2\fRe(\phi_{tt i} \Delta \phi_{\bi}) - \sum_{k}( |\nabla_i \phi_{t k} |^2 + |\nabla_{\bi} \phi_{t k}|^2)\right)\\
    \end{aligned}
\end{equation}

\noindent where we used the commutation formulae

\begin{equation}\label{commute1}
    \Delta \nabla_{\bk}\phi - \nabla_{\bk} \Delta \phi = - \ol{T^q_{i k}} \phi_{i \bq}.
\end{equation}

\ 

\begin{equation}\label{commute2}
    \Delta \nabla_{k}\phi - \nabla_{k} \Delta \phi = - T^q_{i k} \phi_{q \bi} + R_{i \bi k}^q \phi_{q} .
\end{equation}

Now since 

$$\sum_k (\phi_{tk}\ol{T^q_{i k}} \phi_{i \bq t} + \phi_{t \bk}T^q_{i k} \phi_{q \bi t}) \leq C |\nabla \phi_t|^2 + \epsilon \sum_{i,k} |\nabla_{\bi} \phi_{k t }|^2,$$

\noindent \eqref{diff-pde} implies

\begin{equation}
    \begin{aligned}
        \mathcal{L}_{\phi} (\Delta \phi) \geq \Delta \psi -C| \nabla \phi_t|^2 + (1-2\epsilon)\sum_{i,k} ( |\nabla_i \phi_{t k} |^2 + |\nabla_{\bi} \phi_{t k}|^2)
    \end{aligned}
\end{equation}

\noindent where we used that at the maximum point $\nabla_{\bi} Q = 0$, and hence $\Delta \phi_{\bi} =0$.

\begin{equation}\label{max-principle}
    \begin{aligned}
        \mathcal{L}_{\phi} Q &= \frac{\mathcal{L}_{\phi} \Delta \phi}{n+\Delta \phi} - (n + \Delta \phi)\frac{|\Delta \phi_t|^2}{(n+\Delta \phi)^2} + (n+ \Delta \phi) g''(t)\\
        &\geq \frac{\Delta \psi}{n + \Delta \phi} -C\frac{|\nabla \phi_t|^2}{n+\Delta \phi} + \frac{(1-2\epsilon)}{n + \Delta \phi}\sum_{i,k} ( |\nabla_i \phi_{t k} |^2 + |\nabla_{\bi} \phi_{t k}|^2) \\
        &\;\;\;+ (n+ \Delta \phi) (g''(t) - g'(t)^2)
    \end{aligned}
\end{equation}

\

\noindent where we used that at an interior maximum point $\partial_t \Delta \phi = -g'(t)(n + \Delta \phi)$. Note that from the PDE we have the following inequality

$$|\nabla \phi_t|^2  \leq (n+ \Delta \phi)\phi_{tt},$$

\noindent so that $\dfrac{|\nabla \phi_t|^2}{n + \Delta \phi} \leq C$, since $\phi_{tt}$ is bounded. Combining this with \eqref{max-principle}, it is enough to show that

$$(n+ \Delta \phi)(g''(t) - g'(t)^2) >  \frac{\Delta \psi}{n+\Delta \phi} -C .$$

\

This is satisfied by setting $g(t) = -a \log(t+b)$ where $b$ is chosen so that $1 \leq t+b \leq 2$ (assuming $t$ is in a bounded domain). Then

$$g''(t) = \frac{(g'(t))^2}{a}, \text{    and    } (g'(t))^2 \geq \frac{a^2}{4}.$$

So the above inequality reads

$$(n+ \Delta \phi)\frac{a(1-a)}{4} >  \frac{\Delta \psi}{n + \Delta \phi} -C .$$

Now choose $a = \dfrac{1}{2}$. This gives Laplacian estimates when $X=0$.

\


Next we derive Laplacian estimate for the general case when $X$ does not vanish identically. The equation is given by

\begin{equation}\label{balanced-pde}
    \phi_{tt}(n + nX \phi + \Delta \phi) - \sum |\phi_{k t}|^2 = \psi -\frac{nX \phi_t^2}{2}.
\end{equation}

\

\

Similar to before, apply $\Delta$ to \eqref{balanced-pde} to get

\begin{equation}
    \begin{aligned}
        A(\phi) \Delta \phi_{tt} &+ \phi_{tt}(n \Delta X \phi + n \sum_k 2 \fRe( \nabla_k X .\phi_{\bk}) + \Delta \Delta \phi + nX \Delta \phi)\\
        & - 2\fRe{\phi_{\bk t} \Delta \phi_{kt}}  + 2 \fRe(\phi_{tt k}(n \nabla_{\bk} X\phi+nX \phi_{\bk}+ \nabla_{\bk} \Delta \phi))\\
        &-  \sum_{i,k}(|\phi_{ikt}|^2 + |\phi_{i \bk t}|^2 ) = \Delta \psi-\frac{n}{2}\Delta(X \phi_t^2)
    \end{aligned}
\end{equation}

\noindent  which after using the commutation formulae \cref{commute1,commute2} for $\Delta \phi_{kt}$, $\Delta \phi_{\bk t}$ gives

\begin{equation}
    \begin{aligned}
     &\mathcal L_{\phi} (\Delta \phi) + \sum_k(\phi_{tk}\ol{T^q_{i k}} \phi_{i \bq t} + \phi_{t \bk}T^q_{i k} \phi_{q \bi t} - \phi_{t \bk} R_{i \bi k}^q \phi_{q t}) \\
        &+ \phi_{tt}(n \Delta X \phi + n \sum_k 2 \fRe( \nabla_k X .\phi_{\bk}) + nX \Delta \phi) \\
        &+ 2 \fRe(\phi_{tt k}(n \nabla_{\bk} X\phi+ \nabla_{\bk} \Delta \phi + nX \phi_{\bk})) -  \sum_{i,k}(|\phi_{ikt}|^2 + |\phi_{i \bk t}|^2 )\\
        &= \Delta \psi -\frac{n}{2}\Delta(X \phi_t^2).
    \end{aligned}
\end{equation}

\

Then as before absorbing the torsion and curvature terms, we get

\begin{equation}\label{diff-eq}
    \begin{aligned}
        \mathcal L_{\phi} (\Delta \phi) \geq & - C |\phi_{tk}|^2
    - \phi_{tt}(n \Delta X \phi + n \sum_k 2 \fRe( \nabla_k X .\phi_{\bk})  + nX \Delta \phi)  \\
        &- 2 \fRe(\phi_{tt k} (n\nabla_{\bk} X.\phi+ \nabla_{\bk} \Delta \phi + nX \phi_{\bk})) +  (1-\epsilon) \sum_{i,k}(|\phi_{ikt}|^2 \\
        &+ |\phi_{i \bk t}|^2 )+ \Delta \psi -\frac{n}{2}\Delta(X \phi_t^2).
    \end{aligned}
\end{equation}

\

Here the problem term is $- 2n \fRe(\phi_{tt k} \nabla_{\bk} X.\phi)$. We modify the test function accordingly.


\

Let $L = \sup (n + n X \phi)$. Consider $Q(u) = \log(\Delta \phi + L) + g(t) + b \phi_{tt},$ for a small constant $b>0$. Then at the point of maximum

\begin{equation} \label{eq1}
    \nabla_{\bk} \Delta \phi = - b \phi_{t t \bk} ( L + \Delta \phi),
\end{equation}

\noindent so that 

\begin{equation}\label{eq2}
    - 2n \fRe(\phi_{tt k} \nabla_{\bk} \Delta \phi) = 2nb |\phi_{tt k}|^2 ( L +\Delta \phi).
\end{equation}

\noindent and

\begin{equation}\label{eq3}
   -\phi_{tt} \sum_k\frac{|\nabla_k \Delta \phi|^2}{ (L + \Delta \phi)^2} = -\phi_{tt} b^2 \sum_k|\phi_{tt k}|^2. 
\end{equation}

\begin{lemma}

\ 

    \begin{equation}
        \mathcal{L}_{\phi} \left( \log(L + \Delta \phi) + g(t) + b \phi_{tt}\right) \geq - C \frac{|\nabla \phi|}{L + \Delta \phi}+ \frac{\delta(1-\epsilon)}{4} (L + \Delta \phi) -C
    \end{equation}

    \

    \noindent for a constant $C = C(\phi,\phi_t,\phi_{tt}, \epsilon,b)$. 
\end{lemma}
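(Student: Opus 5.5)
We compute $\mathcal L_\phi$ of each of the three pieces of $Q=\log(L+\Delta\phi)+g(t)+b\phi_{tt}$ at the interior maximum point and add the results. Throughout we use the a priori bounds on $\phi,\phi_t,\phi_{tt}$ from \cite{George2024}, and the structural bound $|\nabla\phi_t|^2\le A(\phi)\phi_{tt}+\psi-\tfrac{nX}{2}\phi_t^2\le C(L+\Delta\phi)$, which follows from \eqref{balanced-pde} since $X\le 0$.

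\textbf{The logarithm.} Write $W=L+\Delta\phi$. Then
$$\mathcal L_\phi\log W=\frac{\mathcal L_\phi\Delta\phi}{W}-\frac{\phi_{tt}|\nabla\Delta\phi|^2+A(\phi)|\partial_t\Delta\phi|^2-2\fRe(\phi_{tk}\partial_t\Delta\phi_{\bk}\,\overline{\nabla_k\Delta\phi})}{W^2}.$$
We substitute \eqref{diff-eq} for $\mathcal L_\phi\Delta\phi$ and handle the terms on its right as follows.
\begin{itemize}
\item The bad term $-2n\fRe(\phi_{ttk}\nabla_{\bk}\Delta\phi)$ becomes $+2nb|\nabla\phi_{tt}|^2W$ by \eqref{eq2}.
\item The term $-2n\fRe(\phi_{ttk}\nabla_{\bk}X\,\phi)$ is bounded by $C|\nabla\phi_{tt}|$.
\item The term $-2nX\fRe(\phi_{ttk}\phi_{\bk})$ is bounded by $C|\nabla\phi_{tt}||\nabla\phi|$.
\item The terms $\phi_{tt}(\dots)$ contribute at worst $-C-C|\nabla\phi|-C\,\Delta\phi$.
\item The term $\Delta(X\phi_t^2)$ contains $|\phi_{tk}|^2$, $\phi_t\Delta\phi_t$ and $\phi_t\nabla X\cdot\nabla\phi_t$. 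These are controlled by $CW$ together with a small multiple of $\sum(|\phi_{ikt}|^2+|\phi_{i\bk t}|^2)$.
\end{itemize}
After dividing by $W$, the terms with $|\nabla\phi_{tt}|$ are absorbed into $2nb|\nabla\phi_{tt}|^2$ by Cauchy--Schwarz. This produces the error $-C|\nabla\phi|^2/W^2$, which is at most $C|\nabla\phi|/W$ once we may assume $W\ge|\nabla\phi|$; otherwise the claim is trivial. The gradient-squared term $-\phi_{tt}|\nabla\Delta\phi|^2/W^2$ equals $-\phi_{tt}b^2|\nabla\phi_{tt}|^2$ by \eqref{eq3}. For small $b$ this is dominated by the $2nb|\nabla\phi_{tt}|^2$ gained above, since $\phi_{tt}$ is bounded.

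\textbf{The $t$-derivative and mixed terms.} At the maximum we also have $\partial_t\Delta\phi=-(g'+b\phi_{ttt})W$. So $A|\partial_t\Delta\phi|^2/W^2$ is about $A(g'+b\phi_{ttt})^2$, which is bounded by $W(2g'^2+2b^2\phi_{ttt}^2)$. The mixed term is treated with the same substitutions and Cauchy--Schwarz against $|\nabla\phi_t|^2\le CW$.

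\textbf{The $b\phi_{tt}$ term.} Differentiating the PDE twice in $t$ gives
$$\mathcal L_\phi\phi_{tt}=-2\phi_{ttt}\partial_t A+2|\nabla\phi_{tt}|^2+\dots.$$
The positive term $2|\nabla\phi_{tt}|^2$ helps, and the term $\phi_{ttt}^2$-type pieces must be controlled. We expect this to be the main obstacle: keeping $A\,\phi_{ttt}^2$ (with $A\sim W$) from destroying the good term $W g''$. We would first try to use $b$ small, so that $b\,\mathcal L\phi_{tt}$ produces at worst $-CbW\phi_{ttt}^2$-type errors. Combined with the $b^2\phi_{ttt}^2$ from the previous step, these would be controlled by rewriting $\phi_{ttt}$ through the PDE's $t$-derivative in terms of $\partial_t\Delta\phi$ and $\nabla\phi_t$, which are in turn fixed by the maximum conditions.

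\textbf{Choice of $g$.} Finally we choose $g=-a\log(t+c)$ as in the $X=0$ case, with $a<1$. Then $\mathcal L_\phi g=A g''$ satisfies
$$A(g''-2g'^2)\ge \delta W,\qquad \delta=\tfrac{a(1-2a)}{4}>0$$
up to lower-order terms, using $A\ge W-C$. Combined with the surviving factor $(1-\epsilon)$ from the Hessian terms, this gives the claimed positive term $\frac{\delta(1-\epsilon)}{4}(L+\Delta\phi)$. All remaining terms are absorbed into $-C|\nabla\phi|/W-C$.
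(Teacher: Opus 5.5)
Your handling of the spatial terms follows the paper: $b\phi_{tt}$ is used through \eqref{eq1}--\eqref{eq3} to neutralize $-2\fRe(\phi_{ttk}\nabla_{\bk}\Delta\phi)$. The gap is the step you flag as ``the main obstacle'', which is the actual content of the lemma. There you give only a plan, and its key mechanism does not work. Put $W=L+\Delta\phi$. The term $-A|\partial_t\Delta\phi|^2/W^2$ in $\mathcal L_\phi\log W$ has size $|\Delta\phi_t|^2/W$ with coefficient essentially $1$. The third-order good terms only dominate $\frac{1-\epsilon}{n}|\Delta\phi_t|^2/W$, so something must cancel it; rewriting it as $-A(g'+b\phi_{ttt})^2$ just moves the difficulty to $W\phi_{ttt}^2$. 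You propose to take $b$ small so that $b\mathcal L_\phi(\phi_{tt})$ costs at most $-CbW\phi_{ttt}^2$, and then to control $\phi_{ttt}$ via the $t$-derivative of \eqref{balanced-pde}. That equation gives $A\phi_{ttt}=-\phi_{tt}\Delta\phi_t+2\fRe(\phi_{tk}\phi_{tt\bk})+O(1)$. With $|\nabla\phi_t|^2\le\phi_{tt}A$, the best available bound is $W\phi_{ttt}^2\lesssim\phi_{tt}^2|\Delta\phi_t|^2/W+\phi_{tt}|\nabla\phi_{tt}|^2+1$, and this is sharp in the nearly degenerate regime $|\nabla\phi_t|^2\approx\phi_{tt}A$. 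So the error $CbW\phi_{ttt}^2$ contains a term of order $Cb\,\phi_{tt}|\nabla\phi_{tt}|^2$. That is linear in $b$, exactly like the gains $2b|\nabla\phi_{tt}|^2$ that would have to absorb it. Smallness of $b$ therefore buys nothing, and unless $C\phi_{tt}$ happens to be small the argument does not close.

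The missing idea, which is the heart of the paper's proof, is to use the $t$-maximum condition \emph{inside} $b\mathcal L_\phi(\phi_{tt})$, where it turns the dangerous term into a good one. At the maximum $b\phi_{ttt}=-g'-\Delta\phi_t/W$. Hence the term $-2b\phi_{ttt}\Delta\phi_t$ of $b\mathcal L_\phi(\phi_{tt})$ (from your $-2\phi_{ttt}\partial_tA$) equals
\[
\frac{2|\Delta\phi_t|^2}{W}+2g'\Delta\phi_t .
\]
Its first part cancels $-A|\partial_t\Delta\phi|^2/W^2\ge-|\Delta\phi_t|^2/W$, since $A\le W$ by the choice of $L$. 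The remaining terms are then handled as follows:
\begin{itemize}
\item The cross term $2g'\Delta\phi_t$ costs $\delta|\Delta\phi_t|^2/W+Wg'^2/\delta$. The piece $Wg'^2/\delta$ is beaten by $Ag''$ for $g=-a\log(1+t)$ with $a<\delta$.
\item The mixed term becomes $-\frac{2b}{W}\fRe(\phi_{tk}\phi_{tt\bk}\Delta\phi_t)$ by \eqref{eq1}. It is absorbed by the $2b|\nabla\phi_{tt}|^2$ of $b\mathcal L_\phi(\phi_{tt})$, using $|\nabla\phi_t|^2\le\phi_{tt}W$.
\end{itemize}
In your variables this is the identity $-2b\phi_{ttt}\Delta\phi_t=2b^2W\phi_{ttt}^2+2bWg'\phi_{ttt}$. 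So this part of $b\mathcal L_\phi(\phi_{tt})$ is a gain that offsets the $-Ab^2\phi_{ttt}^2$ inside $-A(g'+b\phi_{ttt})^2$, not an error of size $-CbW\phi_{ttt}^2$. Separately, your remark that the case $W<|\nabla\phi|$ is trivial applies to the consequence $\sup\Delta\phi\le C(1+\sup|\nabla\phi|)$. It does not apply to the lemma's inequality as stated.
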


\

\begin{proof}

Applying $\mathcal L_{\phi} = \phi_{tt} \Delta + A(\phi) \partial_t\partial_t - \phi_{t \bk} \nabla_{k} \partial_t - \phi_{t k} \nabla_{\bk} \partial_t$ to $Q$ gives

\begin{equation}\label{c4}
\begin{aligned}
    \mathcal{L}_{\phi} \left( \log( L + \Delta \phi) + g(t) + b \phi_{tt}\right) =& \; \frac{\mathcal L_{\phi} (\Delta \phi)}{L + \Delta \phi} -\phi_{tt}\frac{| \nabla_k\Delta \phi|^2}{(L + \Delta \phi)^2} + A(\phi) g''(t) + b \mathcal L_{\phi}(\phi_{tt})\\
    & - \frac{1}{(L + \Delta \phi)^2}\left[A(\phi)|\partial_t \Delta \phi|^2 - 2 \fRe(\phi_{tk} \nabla_{\bk} \Delta\phi \partial_t \Delta \phi) \right].
\end{aligned}
\end{equation}

\

By \eqref{diff-eq} and \eqref{eq1}, and since $ L + \Delta \phi \geq \dfrac{|\nabla \phi_{t}|^2}{\phi_{tt}}$

\

\begin{equation}
    \begin{aligned}
       \frac{\mathcal{L}_{\phi} \Delta\phi}{ L + \Delta \phi} \geq &-C( 1+ \phi_{tt}) - C \frac{|\nabla \phi|}{L + \Delta \phi} +  2b|\phi_{ttk}|^2 - \frac{2n}{ L + \Delta \phi}\fRe(\phi_{ttk}\nabla_{\bk} X.\phi)\\
       & + \frac{ (1-\epsilon)}{L + \Delta \phi} \sum_{i,k}(|\phi_{ikt}|^2 + |\phi_{i \bk t}|^2 ) -\frac{n}{2( L +\Delta \phi)}\Delta(X \phi_t^2)- C |\phi_{ttk}|.
    \end{aligned}
\end{equation}

\

Since $\dfrac{|\Delta \phi_t^2|}{L + \Delta \phi} = \dfrac{|2\phi_t \Delta \phi_t + 2 |\nabla \phi_t|^2|}{L + \Delta \phi} \leq C\left(1+ \dfrac{|\Delta \phi_t|}{L + \Delta \phi}\right)$, we have

\

\begin{equation}\label{c1}
    \begin{aligned}
          \frac{\mathcal{L}_{\phi} \Delta \phi}{L + \Delta \phi} \geq & \left(b- \frac{1}{L + \Delta \phi}\right)|\phi_{tt k}|^2 + \frac{ (1-\epsilon)}{ L + \Delta \phi} \sum_{i,k}(|\phi_{ikt}|^2 + |\phi_{i \bk t}|^2 ) - C \dfrac{|\Delta \phi_t|}{L + \Delta \phi}\\
          &- C \left(1 + \frac{|\nabla \phi|}{L + \Delta \phi}\right).
    \end{aligned}
\end{equation}

\

The third term above is controlled by the second term as follows

$$C|\Delta \phi_{t}| \leq \frac{nC^2}{4\epsilon} + \epsilon \sum_{k}|\phi_{k \bk t}|^2.$$

Differentiating the PDE \eqref{balanced-pde} by $\partial_t \partial_t$ and using $\phi_{ttt} = -\dfrac{1}{b}\left(g'(t) + \dfrac{\Delta \phi_t}{L + \Delta \phi}\right)$ gives

\begin{equation}\label{c2}
    \begin{aligned}
      \mathcal L_{\phi} (\phi_{tt}) &= -\phi_{ttt}(nX \phi_t + \Delta \phi_t) + 2\sum_k|\phi_{ktt}|^2 -n X(\phi_t \phi_{ttt} + \phi_{tt}^2) \\
      & = \frac{|\Delta \phi_t|^2}{b( L + \Delta \phi)} + \frac{g'(t)\Delta \phi_t}{b} -\frac{2}{b}n X \phi_t \left(\frac{\Delta \phi_t}{L + \Delta \phi} + g'(t)\right) \\
      &\;\;\;-nX \phi_{tt}^2  + 2\sum_k|\phi_{ktt}|^2\\
      &\geq  \frac{1}{b}\left[\left(1- \epsilon - \delta\right)\frac{|\Delta \phi_t|^2}{ L + \Delta \phi}  - \frac{( L +\Delta \phi) g'(t)^2}{4 \delta}+ 2b \sum_k|\phi_{ktt}|^2 \;- \; C\right]
    \end{aligned}
\end{equation}
\noindent where in the last line we used 

$$|g'(t)\Delta \phi_t| \leq \frac{(L +\Delta \phi) g'(t)^2}{4 \delta} + \frac{\delta}{L + \Delta \phi} |\Delta \phi_t|^2,$$

\noindent and 

$$|2nX \phi_t \Delta \phi_t| \leq C + \epsilon|\Delta \phi_t|^2.$$

Finally we have

\begin{equation}\label{c3}
    \mathcal L_{\phi} (g(t)) =  A(\phi) g''(t).
\end{equation}

\

From \cref{eq1,eq3}, and $\dfrac{A(\phi)}{L + \Delta \phi} \leq (1+ \epsilon)$, it follows

\begin{equation}\label{c5}
\begin{aligned}
       - \frac{1}{(L + \Delta \phi)^2}&\left(A(\phi)|\partial_t \Delta \phi|^2  - 2 \fRe(\phi_{tk} \partial_{\bk} \Delta\phi \partial_t \Delta \phi) \right)-\phi_{tt}\sum_k\frac{| \nabla_k\Delta \phi|^2}{(L + \Delta \phi)^2}  \\
       &\geq-\frac{2b}{L + \Delta \phi} \sum_k\fRe(\phi_{tk} \phi_{t t \bk} \Delta \phi_{t})-(1+ \epsilon)\frac{|\Delta \phi_t|^2}{L + \Delta \phi}  -\phi_{tt} b^2 \sum_k|\phi_{tt k}|^2\\
       & \geq -b\phi_{tt}\frac{|\Delta \phi_t|^2}{L + \Delta \phi} - \frac{b}{\phi_{tt}( L +\Delta \phi)} |\phi_{t k}|^2 |\phi_{tt \bk}|^2 -(1+ \epsilon)\frac{|\Delta \phi_t|^2}{L + \Delta \phi}\\
       &\;\;\;-\phi_{tt} b^2 \sum_k|\phi_{tt k}|^2.
\end{aligned}
\end{equation}

In the last line, the second term could controlled by the $2b|\phi_{tt k}|^2$ from \eqref{c2}, since $L + \Delta \phi > \dfrac{|\phi_{t k}|^2}{\phi_{tt}}$.

\

Combining \cref{c1,c2,c3,c4,c5}, 

\begin{equation}
    \begin{aligned}
         \mathcal{L}_{\phi} \left( \log(L + \Delta \phi) + g(t) + b \phi_{tt}\right) \geq &- C \left(1 + \frac{|\nabla \phi|}{L + \Delta \phi}\right) + \frac{ (1-2\epsilon)}{L + \Delta \phi} \sum_{i,k}(|\phi_{ikt}|^2 + |\phi_{i \bk t}|^2 )\\ &+ \left(1- \epsilon -\delta -b\phi_{tt}\right)\frac{|\Delta \phi_t|^2}{L + \Delta \phi} -(1+ \epsilon)\frac{|\Delta \phi_t|^2}{L + \Delta \phi}\\&-\frac{(L +\Delta \phi )(g'(t))^2}{4 \delta}+ (2b- \frac{1}{L + \Delta \phi} \\
         &-\phi_{tt}b^2) \sum_k|\phi_{ktt}|^2- \; C + A(\phi)g''(t) ,
    \end{aligned}
\end{equation}

\

\noindent where $C=C(\phi,\phi_t,\phi_{tt}, \epsilon,b).$ Set $g(t) = -2 \delta \log(1+t)$, so that $g''(t) = \dfrac{g'(t)^2}{2 \delta}$ and 

$$A(\phi)g''(t)  - \frac{( L + \Delta \phi) (g'(t))^2}{4 \delta}\geq \frac{\delta(1-\epsilon)}{4}( L +\Delta \phi) .$$

\

Set $b$ small such that $b \phi_{tt} \leq \epsilon < 1$ and choose $\delta < \epsilon$. So we have

$$ \left(1- \epsilon -\delta -b\phi_{tt}\right)\frac{|\Delta \phi_t|^2}{L + \Delta \phi} -(1+ \epsilon)\frac{|\Delta \phi_t|^2}{L + \Delta \phi} \geq -4 \epsilon \frac{|\Delta \phi_t|^2}{L + \Delta \phi}.$$

Now we have  

$$4 \epsilon|\Delta \phi_t|^2 \leq 4n\epsilon \sum_k|\phi_{k \bk t}|^2 \leq (1-2\epsilon) \sum_{k} |\phi_{k \bk t}|^2.$$

Combining the above gives

\begin{equation}
\begin{aligned}
    \mathcal{L}_{\phi} \left( \log(L + \Delta \phi) + g(t) + b \phi_{tt}\right) \geq &- C \frac{|\nabla \phi|}{ L + \Delta \phi}+ \frac{\delta(1-\epsilon)}{4} (L + \Delta \phi) -C.
         \end{aligned}
\end{equation}

\end{proof}

\ 

From the maximum principle, since $\mathcal L_{\phi} (Q) \leq 0$, it follows that $\sup |\Delta \phi| \; \leq C (1 + \sup |\nabla \phi|)$.

\

\section{Gradient and higher order estimates}

\

The gradient estimates can be derived by classical interpolation inequalities. But since the second-order estimate derived is for the Laplacian, we first use the Calderon-Zygmund theorem to bound the $L^{p}$-norm of the Hessian in terms of the $L^{p}$-norm of the Laplacian. 

In an open bounded domain $\Omega \subset \mathbb{R}^{ 2n}$ for any $1< p < \infty$, \cite[Theorem~$9.9$]{GT83} gives the following estimate for the real Hessian: 

\begin{equation}\label{CZ}
    \begin{aligned}
        |D^2 \phi|_{L^{p}(\Omega)} \leq C |\Delta \phi|_{L^{p}(\Omega)}.
    \end{aligned}
\end{equation}

\

 We first translate this to the setting of Hermitian manifolds and with the Laplacian associated to $\partial$. Clearly \eqref{CZ} holds on compact Riemannian manifolds. Extension to $\partial$-Laplacian follows from \eqref{Laplacians}.

\begin{equation}
    \begin{aligned}
         |D^2 \phi|_{L^{p}(M)} \leq C (|\Delta \phi|_{L^{p}(M)} + |d\phi|_{L^p(M)})
    \end{aligned}
\end{equation}

\

Now by Gagliardo-Nirenberg interpolation inequalities \cite{Nirenberg59, Nirenberg66}, we have

\begin{equation}
|\nabla\phi|_{L^\infty} \leq C |D^2\phi|_{L^{2n}}^{2/3} |\phi|_{L^\infty}^{1/3} + C |\phi|_{L^\infty}.
\end{equation}

\

Combining the above two inequalities with $p=2n$, and the Laplacian estimates, we get

$$|\nabla\phi|_{L^\infty} \leq C (|\nabla \phi|_{L^{\infty}}^{2/3} + 1). $$

\

This gives the required gradient estimates. 

\

The boundary estimates up to second order was shown in \cite{George24}. Now global $C^{2, \alpha}$ estimates follows from the Evans-Krylov theorem, which applies since the uniform bounds found so far implies that the operator is uniformly elliptic. Higher-order estimates can be derived by bootstrapping techniques. We omit the technical details here. This shows the existence of smooth solutions for the perturbed equation. Now letting $\psi \to 0$ gives $C^{1,1}$ solutions for the degenerate equation, completing the proof of Corollary \ref{corollary}. 

\ 

\

\

{\bf Data Availability Statement:} DAS is not applicable to this article as no datasets were generated or analyzed during the current study.

\

{\bf Conflict of Interest:}
There is no conflict of interest.

\bigskip
\vspace{3cm}

\clearpage

\bibliographystyle{plain}
\bibliography{references}

@article {CH11,
    AUTHOR = {Chen, Xiuxiong and He, Weiyong},
     TITLE = {The space of volume forms},
   JOURNAL = {Int. Math. Res. Not. IMRN},
  FJOURNAL = {International Mathematics Research Notices. IMRN},
      YEAR = {2011},
    NUMBER = {5},
     PAGES = {967--1009},
      ISSN = {1073-7928,1687-0247},
   MRCLASS = {58E10 (53C23 58B20 58D17)},
  MRNUMBER = {2775873},
MRREVIEWER = {Gabjin\ Yun},
       DOI = {10.1093/imrn/rnq099},
       URL = {https://doi.org/10.1093/imrn/rnq099},
}

@incollection {Donaldson10,
    AUTHOR = {Donaldson, Simon K.},
     TITLE = {Nahm's equations and free-boundary problems},
 BOOKTITLE = {The many facets of geometry},
     PAGES = {71--91},
 PUBLISHER = {Oxford Univ. Press, Oxford},
      YEAR = {2010},
      ISBN = {978-0-19-953492-0},
   MRCLASS = {58E15 (35R35 70S15)},
  MRNUMBER = {2681687},
MRREVIEWER = {Derek\ G.\ Harland},
       DOI = {10.1093/acprof:oso/9780199534920.003.0005},
       URL = {https://doi.org/10.1093/acprof:oso/9780199534920.003.0005},
}

@article {George24,
AUTHOR = {George, Mathew},
TITLE = {Volume forms on balanced manifolds and a geodesic equation},
JOURNAL = {arXiv:2406.00995v3},
YEAR = {2024}
}

@article {George2024,
    AUTHOR = {George, Mathew},
     TITLE = {A generalized Gauduchon conjecture for $(p,p)$ forms on Hermitian manifolds},
   JOURNAL = {In preperation}
}

@book {GT83,
    AUTHOR = {Gilbarg, David and Trudinger, Neil S.},
     TITLE = {Elliptic partial differential equations of second order},
    SERIES = {Grundlehren der mathematischen Wissenschaften [Fundamental
              Principles of Mathematical Sciences]},
    VOLUME = {224},
   EDITION = {Second},
 PUBLISHER = {Springer-Verlag, Berlin},
      YEAR = {1983},
     PAGES = {xiii+513},
      ISBN = {3-540-13025-X},
   MRCLASS = {35Jxx (35-01)},
  MRNUMBER = {737190},
MRREVIEWER = {O.\ John},
       DOI = {10.1007/978-3-642-61798-0},
       URL = {https://doi.org/10.1007/978-3-642-61798-0},
}

@article {GS19,
    AUTHOR = {Gursky, Matthew J. and Streets, Jeffrey},
     TITLE = {A formal {R}iemannian structure on conformal classes and the
              inverse {G}auss curvature flow},
   JOURNAL = {Geom. Flows},
  FJOURNAL = {Geometric Flows},
    VOLUME = {4},
      YEAR = {2019},
    NUMBER = {1},
     PAGES = {30--50},
      ISSN = {2353-3382},
   MRCLASS = {53E30 (58E11)},
  MRNUMBER = {4061505},
MRREVIEWER = {Julian\ Scheuer},
       DOI = {10.1515/geofl-2019-0003},
       URL = {https://doi.org/10.1515/geofl-2019-0003},
}

@article {He21,
    AUTHOR = {He, Weiyong},
     TITLE = {The {G}ursky-{S}treets equations},
   JOURNAL = {Math. Ann.},
  FJOURNAL = {Mathematische Annalen},
    VOLUME = {381},
      YEAR = {2021},
    NUMBER = {3-4},
     PAGES = {1085--1135},
      ISSN = {0025-5831,1432-1807},
   MRCLASS = {53C18},
  MRNUMBER = {4333410},
MRREVIEWER = {Yuxin\ Ge},
       DOI = {10.1007/s00208-020-02021-5},
       URL = {https://doi.org/10.1007/s00208-020-02021-5},
}

@misc{He08,
      title={The Donaldson equation}, 
      author={Weiyong He},
      year={2008},
      eprint={0810.4123},
      archivePrefix={arXiv},
      primaryClass={math.AP},
      url={https://arxiv.org/abs/0810.4123}, 
}

@article {Nirenberg59,
    AUTHOR = {Nirenberg, L.},
     TITLE = {On elliptic partial differential equations},
   JOURNAL = {Ann. Scuola Norm. Sup. Pisa Cl. Sci. (3)},
  FJOURNAL = {Annali della Scuola Normale Superiore di Pisa. Classe di
              Scienze. Serie III},
    VOLUME = {13},
      YEAR = {1959},
     PAGES = {115--162},
      ISSN = {0391-173X},
   MRCLASS = {35.00},
  MRNUMBER = {109940},
MRREVIEWER = {L.\ Garding},
}

@article {Nirenberg66,
    AUTHOR = {Nirenberg, L.},
     TITLE = {An extended interpolation inequality},
   JOURNAL = {Ann. Scuola Norm. Sup. Pisa Cl. Sci. (3)},
  FJOURNAL = {Annali della Scuola Normale Superiore di Pisa. Classe di
              Scienze. Serie III},
    VOLUME = {20},
      YEAR = {1966},
     PAGES = {733--737},
      ISSN = {0391-173X},
   MRCLASS = {46.38},
  MRNUMBER = {208360},
MRREVIEWER = {Richard\ Beals},
}

\end{document}